\newtheorem{thm}{Theorem}[section]
\newtheorem{lem}[thm]{Lemma}
\newtheorem{condition}[thm]{Condition}
\newtheorem{prop}[thm]{Proposition}
\newtheorem{cor}[thm]{Corollary}
\theoremstyle{definition}
\newtheorem{res}[thm]{Result}
\newtheorem{example}[thm]{Example}
\def\zl{Z(U(L))}
\def\zh{Z(U(H))}
\def\uhl{U(H)_{\lambda}}
\def\adl{adL}
\def\sl{Sz(U(L))}
\def\sh{Sz(U(H))}
\begin{document}
\title[Stability of weight spaces]{On the stability of weight spaces of enveloping algebra in prime characteristic}

\date{\today}

\maketitle
\centerline {\small Gil Vernik}
\centerline {\small Mathematisches Seminar, }
\centerline {\small Christian-Albrechts-Universität zu Kiel,}
\centerline {\small Ludewig-Meyn Str. 4, 24098 Kiel,}
\centerline {\small Germany.}
\centerline {\small \tt vernik@math.uni-kiel.de}

\begin{abstract}
By the result of Dixmier, any weight space of enveloping algebra of Lie algebra $L$ over a field of characteristic 0 is $adL$ stable. In this paper we will show that this result need not be true, if $F$ is replaced by a field of prime characteristic. A condition will be given, so a weight space will be $adL$ stable.
\end{abstract}

\section{Introduction}
Let $L$ be a finite dimensional Lie algebra over a field $F$, $U(L)$ its enveloping algebra with center $\zl$. For each linear form $\lambda : L \longrightarrow  F$ we denote 
\[
U(L)_{\lambda} = \{u \in U(L)\ |\ [x,u] = \lambda(x)u\ for\ all\ x \in L \}.
\]
Clearly $U(L)_{\lambda}$ is a submodule of $U(L)$ and $U(L)_{\lambda} U(L)_{\mu} \subseteq U(L)_{\lambda+\mu}$. We call $U(L)_{\lambda}$ a weight space of $U(L)$ and the sum of all $U(L)_{\lambda}$ is direct and is denoted by $\sl$, the semi-center of $U(L)$. Semi-center was introduced by Dixmier \cite[4.3]{dixmBook} and its crucial properties in characteristic 0 is that any two sided ideal of $U(L)$ has non trivial intersection with semi-center ( \cite[4.4.1]{dixmBook} ). If $L$ is nilpotent or $[L,L]=L$ then $\zl = \sl$. In addition if $L$ is nilpotent and $charF=0$, then by \cite{dixmBook} $\sl$ is factorial domain and if $charF=p>0$ then $\sl$ is again factorial by \cite{B}. If $charF=0$ and $L$ is solvable, then $\sl$ is factorial due to Moeglin \cite{M}. However, if char$F=p>0$, then by recent result  \cite{braunvernik}, it is shown that $\sl$ need not be 
factorial. \newline \\
Consider $L=Fx+H$  a Lie algebra over an algebraically closed field $F$ where $H$ is an ideal in $L$. A weight space $U(H)_{\lambda}$ is called $adL$ stable if $[L,U(H)_{\lambda}] \subseteq U(H)_{\lambda}$. By Dixmier \cite[1.3.11]{dixmBook}, when $charF=0$, $U(H)_{\lambda}$ is always $adL$ stable which implies that $\sh$ is also $adL$ stable. The techniques used by Dixmier are characteristic zero dependent and so we cannot adapt them for the case of $charF=p>0$. In this paper we focus on the problem related stability of $\uhl$ in case $L$ is a finite dimensional Lie algebra over a field $F$ of char$F=p>0$. Unlike in the $charF=0$ case, it appears that $\uhl$ is not necessarily $adL$ stable. We will provide conditions, so that $\uhl$ will be $adL$ stable, as well we will show a surprising result that $\sh$ is $\adl$ stable if and only if $\uhl$ is $\adl$ stable for every weight $\lambda$. 
\section{Main result}
Let $L=Fx+H$ be a finite dimensional Lie algebra over a field $F$ and $H$ an ideal in $L$. As was mentioned, by \cite[1.3.11]{dixmBook} if $charF=0$ then $\uhl$ is $adL$ stable. The following discussion will show that this result need not be true if $charF=p>0$. 
For the simplicity we assume $p=3$, although the techniques we use, can be adapted for any $p>0$. 
\begin{res}\label{fres}
Let $L$ be 5-dimensional Lie algebra over a field $F$ of char$F=3$, with the following multiplication table
\[
	[x,e_{1}]=0,\ [x,e_{2}]=e_{2},\ [x,e_{3}]=2e_{3}, [y,e_{1}]=e_{3},\ [y,e_{2}]=e_{1},\ [y,e_{3}]=e_{2},
\]
\[
	[x,y]=2y,\ the\ rest\ of\ products\ 0.
\]
The subspace $K \equiv span\{e_{1},e_{2},e_{3}\}$ is an ideal of $L$ and consider $L=Fx+H$, $H=Fy + K$. Clearly $H$ is a codimension one ideal in $L$. 
Then by \cite[Section 9]{braunvernik}
\begin{equation}\label{first}
\sl= U(K)^{ady}[x^{p}-x,y^{p^{2}}-y^{p}],\ Z(U(H)) = U(K)^{ady}[y^{p^{2}}-y^{p}].
\end{equation}
Therefore $\sl = Z(U(H))[x^{p}-x]$ and using direct calculations we can show that
\[
Z(U(H))) = F[y^{p^{2}}-y^{p},e_{1}^{p},e_{2}^{p},e_{3}^{p},e_{1}e_{2}^{2}+e_{1}^{2}e_{3}+e_{2}e_{3}^{2}].
\]
Consider $u \equiv e_{1}+e_{2}+e_{3}$ and $v \equiv e_{2}^{2}+e_{1}e_{3}+2e_{2}e_{3}+2e_{3}^{2}$. Now $[y,u]=u$ 
and $[y,v]=2e_{1}e_{2}+e_{3}^{2}+e_{1}e_{2}+2e_{1}e_{3}+2e_{2}^{2}+e_{2}e_{3}=
2e_{2}^{2}+2e_{1}e_{3}+e_{2}e_{3}+e_{3}^{2} = 2v$. Hence $u \in U(H)_{1}$ ( the weight space of $U(H)$ related to eigen value $1$ ) and $v  \in U(H)_{2}$. It can be seen that no more semi-invariant for $U(H)$ exists and $\sh=Z(U(H))[u,v]$. But then $[x,u]=e_{2}+2e_{3}$ and this implies that $u$ is not semi-invariant of $U(L)$ and consequently $U(H)_{1}$ is not $adL$ stable. In a similar way, we see that $U(H)_{2}$ is not $adL$ stable, since $[x,v] \notin U(H)_{2}$.
\end{res}
We would like to notice that although both $u$ and $v$ are not $L$-semi-invariant, $uv = w$ where $w \equiv e_{1}e_{2}^{2}+e_{1}^{2}e_{3}+e_{2}e_{3}^{2} + e_{2}^{3}+e_{3}^{3}$ and it can be seen that $w \in Z(U(H))$ or by \eqref{first}, $w \in \sl$. We recall the following result of Moeglin \cite[Lemma 2]{M}. Let $L$ be solvable Lie algebra over a field of characteristic 0 and let $u,v$ be non zero elements of $U(L)$, then if $uv$ is semi-invariant, then so are $u$ and $v$. In addition if $F$ is algebraically closed, then $uv \in \sl$ implies that $u,v \in \sl$. This result was also extended in \cite[Proposition 1.3]{oomsnau} and it seems to be crucial in showing that $\sl$ is factorial in characteristic 0. Similar result appears to be wrong if $charF=p>0$. Clearly if $L$ is Lie algebra over a field $F$ of $charF=p>0$ defined by $[x,y]=y$, $[x,z]=-z$, $[y,z]=t$ and $[t,L]=0$, then $yy^{p-1}=y^{p} \in U(L)_{0}$ but both $y, y^{p-1} \notin U(L)_{0}$. However, we are interested in the following condition 
\begin{condition}\label{cond}
Let $L$ be solvable finite dimensional Lie algebra over an algebraically closed field $F$ of char$F=p>0$. Let $u,v \in U(L)$ be non zero such that $u$ and $v$ are linearly independent over $U(L)$. If $uv$ is semi-invariant, then so are $u$ and $v$.
\end{condition}
Result \eqref{fres} is the evidence where this condition fails and by \cite{braunvernik} $\sl$ in this case is not factorial. Based on many observations, we have a strong assumption to believe there is linkage between Condition \eqref{cond} and factoriality of $\sl$ where $L$ is a finite dimensional Lie algebra over algebraically closed field $F$ of $charF=p>0$, however we are not able not supply any general proof here. \newline \\
We would like to mention that being $adL$ stable, does not imply that $\uhl$ is a weight space of $U(L)$, as can be seen in the following example 
\begin{example}
Let $L$ be 4 dimensional Lie algebra spanned by $\{x,y,u_{1},u_{2}\}$ with a multiplication table 
$[x,u_{2}]=u_{1}$, $[y,u_{1}]=u_{1}$, $[y,u_{2}]=u_{2}$ and the rest of the products are 0. 
Then $L=Fx+H$ where $H=span\{y,u_{1},u_{2}\}$. Then $U(H)_{\lambda}=span\{u_{1},u_{2}\}$. Clearly $\lambda=1$ and $U(H)_{1}$ is $adx$ stable. Notice that 
$adx(u_{2})=u_{1}$ but $U(H)_{1} \not\subset \sl$.
\end{example}
The following result will establishes a condition on $L$ for $\uhl$ to be $adL$ stable.
\begin{lem}\label{AR:luma}
Let $L=Fx+H$ be a finite dimensional Lie algebra over an algebraically closed field $F$, of characteristic $p>0$ and $H$ an ideal 
in $L$. If $[L,L]$ is nilpotent , then $U(H)_{\lambda}$ is $adx$ stable and consequently is $adL$ stable.
\end{lem}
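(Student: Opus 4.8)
The plan is to reduce the lemma to a single claim — that $\lambda$ vanishes on $[L,L]$ whenever $U(H)_{\lambda}\neq 0$ — and then to deduce $adx$-stability from a one-line Jacobi identity. As a preliminary I would record that, since $H$ is an ideal of $L=Fx+H$, we have $[L,L]=[x,H]+[H,H]\subseteq H$; moreover $[L,L]$ is an ideal of $L$, hence of $H$, and in particular $[h,x]\in [L,L]$ for every $h\in H$.

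The heart of the matter is the following: if $0\neq u\in U(H)_{\lambda}$, then $\lambda(n)=0$ for every $n\in [L,L]$. I would prove this by showing that $\mathrm{ad}\,n$ acts locally nilpotently on $U(H)$. First, $\mathrm{ad}_{H}(n)$ is a nilpotent endomorphism of $H$: it maps $H$ into $[L,L]$ because $[L,L]$ is an ideal, and on $[L,L]$ it restricts to $\mathrm{ad}_{[L,L]}(n)$, which is nilpotent because $[L,L]$ is a nilpotent Lie algebra; hence a suitable power of $\mathrm{ad}_{H}(n)$ annihilates $H$. Since $\mathrm{ad}\,n$ is a derivation of $U(H)$ that is nilpotent on the generating subspace $H$, the usual Leibniz/PBW-degree estimate shows it is locally nilpotent on all of $U(H)$. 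Applying this to the weight vector $u$, and using $\mathrm{ad}(n)^{k}(u)=\lambda(n)^{k}u$ for all $k$, we get $\lambda(n)^{k}u=0$ for $k$ large; as $u\neq 0$ this forces $\lambda(n)=0$.

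With the claim in hand, $adx$-stability is a formal computation: for $u\in U(H)_{\lambda}$ and $h\in H$, the relation $\mathrm{ad}(h)\mathrm{ad}(x)=\mathrm{ad}(x)\mathrm{ad}(h)+\mathrm{ad}([h,x])$ in $U(L)$ gives
\[
[h,[x,u]]=\lambda(h)[x,u]+[[h,x],u]=\lambda(h)[x,u]+\lambda([h,x])\,u=\lambda(h)[x,u],
\]
the second equality because $[h,x]\in [L,L]\subseteq H$, the third by the claim. Hence $[x,u]\in U(H)_{\lambda}$, so $U(H)_{\lambda}$ is $adx$-stable; since $[H,U(H)_{\lambda}]\subseteq U(H)_{\lambda}$ trivially and $L=Fx+H$, it is $adL$-stable.

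The only place the nilpotence of $[L,L]$ is used, and the main obstacle, is the claim — concretely, the passage from ``$\mathrm{ad}\,n$ nilpotent on $H$'' to ``$\mathrm{ad}\,n$ locally nilpotent on $U(H)$'', together with the observation that a nonzero weight vector cannot survive a locally nilpotent operator unless the weight is $0$. Everything else is bookkeeping; note that the argument need only be run when $U(H)_{\lambda}\neq 0$, the other case being vacuous, and that it does not in fact use algebraic closedness of $F$.
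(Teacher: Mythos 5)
Your proposal is correct and follows essentially the same route as the paper: the Jacobi identity reduces $adx$-stability to showing $\lambda([y,x])=0$ for $y\in H$, and this vanishing is forced by the nilpotence of $ad[y,x]$ applied to the weight vector via $(ad[y,x])^{k}(u)=\lambda([y,x])^{k}u=0$. You are in fact a bit more careful than the paper, which asserts nilpotence of $ad[y,x]$ without spelling out the passage to local nilpotence on $U(H)$ that your Leibniz/PBW argument supplies.
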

\begin{proof}
Let $0 \neq u \in U(H)_{\lambda}$. We need to show that for every $y \in H$, $[y,[x,u]]=\lambda(y)[x,u]$. Indeed
\[
[y,[x,u]]=[[y,x],u] + [x,[y,u]] = [[y,x],u] + \lambda(y)[x,u].
\]
Consider $[[y,x],u]$. Since $y \in H$ then $[y,x] \in H$. Hence 
\[
[[y,x],u] = \lambda([y,x])u
\]
and we need to show that $\lambda([y,x])=0$. Now $[y,x] \in [L,L]$  and so $[y,x]$ is nilpotent, therefore $ad[y,x]$ is nilpotent. Let $k$ be its nilpotency index. Therefore 
\[
0=(ad[y,x])^{k}(u)=\lambda([y,x])^{k}u
\]
hence $\lambda([y,x])^{k}=0$ which implies that $\lambda([y,x])=0$.
 \end{proof}
The previous lemma is somewhat misleading. Unlike in the char$F=0$, case, $U(H)_{\lambda}$ is not necessarily an $L$-module and consequently $\lambda([x,y])$, $x,y \in L$, may be non-zero. The assumption $[L,L]$ being nilpotent is therefore essential here.
The next result links between Lemma \eqref{AR:luma} and completely solvable Lie algebra. Recall that a Lie algebra $L$ is called a completely solvable if there is a finite family $\{I_{i}\}$ of ideals in $L$ such that 
\[
L=I_{1} \supset I_{2} \supset \cdots \supset I_{k} \supset I_{k+1}=(0)\ and\  I_{i}=Fe_{i}+I_{i+1}.
\]
\begin{cor}\label{A:dana}
Let $L$ be a solvable finite dimensional Lie algebra over an algebraically closed field $F$, with char$F=p$. Then $[L,L]$ is nilpotent if and only if $L$ is completely solvable.
\end{cor}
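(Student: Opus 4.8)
The plan is to prove both implications by a triangularization argument resting on Engel's theorem, which (unlike Lie's theorem) remains valid in arbitrary characteristic.

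Suppose first that $L$ is completely solvable, with flag of ideals $L = I_1 \supset \cdots \supset I_{k+1} = 0$. Choose $e_j \in I_j \setminus I_{j+1}$, so that $\{e_j, \dots, e_k\}$ is a basis of $I_j$ and $\{e_1, \dots, e_k\}$ a basis of $L$. Since each $I_j$ is an ideal, every $\mathrm{ad}(z)$ with $z \in L$ is upper triangular in this basis; hence for all $z_1, z_2 \in L$ the operator $\mathrm{ad}([z_1,z_2]) = [\mathrm{ad}(z_1), \mathrm{ad}(z_2)]$ is strictly upper triangular, in particular nilpotent. Restricting to the $\mathrm{ad}$-invariant subspace $[L,L]$ then shows that $\mathrm{ad}_{[L,L]}(z)$ is nilpotent for every $z \in [L,L]$, so by Engel's theorem $[L,L]$ is a nilpotent Lie algebra.

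For the converse, put $N = [L,L]$ and assume $N$ is nilpotent. The first step is to check that every $z \in N$ acts nilpotently on all of $L$: $\mathrm{ad}_L(z)$ maps $L$ into the ideal $N$, and on $N$ it restricts to $\mathrm{ad}_N(z)$, which is nilpotent since $N$ is a nilpotent Lie algebra; composing these facts gives $\mathrm{ad}_L(z)^{c+1} = 0$ once $\mathrm{ad}_N(z)^{c} = 0$. Hence the linear Lie algebra $\mathfrak g := \mathrm{ad}(L) \subseteq \mathfrak{gl}(L)$ has the property that its derived subalgebra $[\mathfrak g, \mathfrak g] = \mathrm{ad}(N)$ consists of nilpotent operators. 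I will then prove the following substitute for Lie's theorem: if $\mathfrak g \subseteq \mathfrak{gl}(V)$ with $V$ finite-dimensional over the algebraically closed field $F$ and $[\mathfrak g, \mathfrak g]$ consisting of nilpotent operators, then $\mathfrak g$ has a common eigenvector in every nonzero $\mathfrak g$-invariant subspace of $V$. Granting this and iterating (passing to successive quotients), one gets a complete flag of $\mathrm{ad}(L)$-submodules of $L$, i.e.\ of ideals $L = I_1 \supset \cdots \supset I_{k+1} = 0$ with $\dim I_j/I_{j+1} = 1$; choosing $e_j \in I_j \setminus I_{j+1}$ yields $I_j = F e_j + I_{j+1}$, which is precisely complete solvability.

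To prove the substitute statement: Engel's theorem applied to the Lie algebra of nilpotent operators $[\mathfrak g, \mathfrak g]$ shows that its common kernel $W = \{v \in V : nv = 0 \text{ for all } n \in [\mathfrak g, \mathfrak g]\}$ is nonzero, and $W$ is $\mathfrak g$-stable because $n(xw) = x(nw) - [x,n]w = 0$ for $x \in \mathfrak g$, $w \in W$, $n \in [\mathfrak g, \mathfrak g]$, using $[x,n] \in [\mathfrak g, \mathfrak g]$. On $W$ the image of $\mathfrak g$ in $\mathfrak{gl}(W)$ is abelian, hence a commuting family of operators, and over an algebraically closed field such a family has a common eigenvector in $W$; that vector is a common eigenvector for all of $\mathfrak g$ on $V$. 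I expect the main obstacle to be conceptual rather than computational: recognizing that the hypothesis ``$[L,L]$ nilpotent'' is exactly what allows the Engel-type argument above to stand in for the (false) Lie theorem, together with the verification that elements of $[L,L]$ act nilpotently on the whole of $L$ and not just on $[L,L]$.
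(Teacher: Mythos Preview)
Your argument is correct. Both implications are handled cleanly: the flag of ideals gives simultaneous triangularization of $\mathrm{ad}(L)$, forcing $\mathrm{ad}([L,L])$ to be strictly upper triangular and hence $[L,L]$ nilpotent by Engel; conversely, Engel applied to $\mathrm{ad}([L,L])\subseteq\mathfrak{gl}(L)$ produces a nonzero common kernel $W$ on which $\mathrm{ad}(L)$ acts through an abelian quotient, so a common eigenvector exists over the algebraically closed field, and iterating on quotients yields the required flag of ideals. (One small remark: the clause ``in every nonzero $\mathfrak g$-invariant subspace'' in your substitute statement is stronger than what you actually prove or need; the version for $V$ itself, reapplied to each successive quotient, already suffices.)

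The paper takes a different route. It invokes Dixmier's proof of \cite[1.3.12]{dixmBook} verbatim, replacing the characteristic-zero weight-space stability result \cite[1.3.11]{dixmBook} by Lemma~\ref{AR:luma}. Concretely, that argument runs by induction on $\dim L$: one picks a codimension-one ideal $H\supseteq[L,L]$, obtains by induction a common eigenvector for $H$, and then uses the $L$-stability of the corresponding $H$-weight space (this is where Lemma~\ref{AR:luma}, or rather its proof transported to the adjoint representation, enters) to upgrade it to a common eigenvector for $L=Fx+H$. Your approach sidesteps both the induction on $\dim L$ and the weight-space machinery, going straight through Engel's theorem; it is more self-contained and makes explicit that nothing beyond Engel is required in positive characteristic. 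The paper's route, on the other hand, is natural in context because it exhibits Lemma~\ref{AR:luma} as exactly the replacement for \cite[1.3.11]{dixmBook} that restores Dixmier's argument when $\mathrm{char}\,F=p>0$.
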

\begin{proof}
The proof is similar to one of Dixmier \cite[1.3.12]{dixmBook} where \cite[1.3.11]{dixmBook} which is used in its proof is replaced by Lemma \eqref{AR:luma}.
\end{proof}
The following result extends Lemma \eqref{AR:luma}.
\begin{prop}\label{stab:gil}
Let $L=Fx+H$ be a finite dimensional Lie algebra over an algebraically closed field $F$ of char$F=p>0$ and $H$ a codimension one ideal in $L$. 
Then $\lambda([L,L])=0$ if and only if $\uhl$ is $adx$ stable.
\end{prop}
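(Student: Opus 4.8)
The plan is to derive both implications from the single Jacobi-identity computation underlying Lemma \ref{AR:luma}, with the codimension-one hypothesis entering only through the decomposition of $[L,L]$. First I would record the structural fact that, since $L=Fx+H$ with $H$ an ideal, we have $[x,x]=0$, $[x,H]\subseteq H$ and $[H,H]\subseteq H$, so that $[L,L]=[x,H]+[H,H]\subseteq H$; in particular $\lambda([L,L])$ is meaningful and it vanishes precisely when $\lambda$ annihilates both $[x,H]$ and $[H,H]$. I would also note at the start that the equivalence is meant for genuine weights, i.e.\ for $\lambda$ with $\uhl\neq 0$: if $\uhl=0$ it is trivially $adx$ stable regardless of the value of $\lambda$ on $[L,L]$.

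For the implication $\lambda([L,L])=0\Rightarrow\uhl$ is $adx$ stable, I would take $0\neq u\in\uhl$ and any $y\in H$ and expand $[y,[x,u]]=[[y,x],u]+[x,[y,u]]=[[y,x],u]+\lambda(y)[x,u]$. Since $[y,x]\in[L,L]\subseteq H$, the term $[[y,x],u]$ equals $\lambda([y,x])u$, which is $0$ by hypothesis, so $[y,[x,u]]=\lambda(y)[x,u]$ for all $y\in H$, i.e.\ $[x,u]\in\uhl$. This is exactly the argument of Lemma \ref{AR:luma}, but with the nilpotency of $[L,L]$ weakened to $\lambda([L,L])=0$.

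For the converse, assume $\uhl$ is $adx$ stable and fix $0\neq u\in\uhl$. Then $[x,u]\in\uhl$, hence $[y,[x,u]]=\lambda(y)[x,u]$ for all $y\in H$; comparing with the expansion above forces $[[y,x],u]=0$, that is $\lambda([y,x])u=0$ (using $[y,x]\in H$), so $\lambda([x,H])=0$. Independently, for any $h_{1},h_{2}\in H$ the Jacobi identity gives $[[h_{1},h_{2}],u]=\lambda(h_{2})[h_{1},u]-\lambda(h_{1})[h_{2},u]=0$ while $[h_{1},h_{2}]\in H$, so $\lambda([h_{1},h_{2}])u=0$ and hence $\lambda([H,H])=0$; this last point holds for any weight in any characteristic. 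Putting the two together, $\lambda([L,L])=\lambda([x,H])+\lambda([H,H])=0$. I do not expect a genuine obstacle: the codimension-one assumption is exactly what produces $[L,L]=[x,H]+[H,H]$, the two facts needed both fall out of the same Jacobi expansion, and the only mild care is the bookkeeping around the trivial weight space, which I would dispose of at the outset as above.
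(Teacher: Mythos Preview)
Your argument is correct and follows the same Jacobi-identity computation as the paper: both directions hinge on comparing $[y,[x,u]]=\lambda([y,x])u+\lambda(y)[x,u]$ with $[y,[x,u]]=\lambda(y)[x,u]$ coming from $adx$-stability. Your write-up is in fact tidier than the paper's, which splits the converse into several cases ($[x,w]=0$, $[y,[x,w]]=0$, etc.) that all collapse to the single comparison you give; you also make explicit two points the paper leaves implicit, namely that $\lambda([H,H])=0$ holds automatically for any genuine weight and that the statement should be read under the assumption $\uhl\neq 0$.
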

\begin{proof}
Assume first that $\uhl$ is $adx$ stable for every $\lambda \in H^{*}$. Hence for every $w \in \uhl$ ,$[x,w] \in \uhl$. 
Therefore for every $y \in H$, 
\begin{equation}\label{equ}
[y,[x,w]]=\lambda([y,x])w + \lambda(y)[x,w].
\end{equation}
If $[x,w]=0$, then \eqref{equ} implies that 
\[
0=\lambda([y,x])w + 0
\]
and so $\lambda([y,x])=0$. Assume next that $[x,w] \neq 0$.
If $[y,[x,w]]=0$ then since $[x,w] \in \uhl$ we get $\lambda(y)=0$. By \eqref{equ}, $0=\lambda([y,x])w + 0$ and so  $\lambda[y,x]=0$. 
If $[y,[x,w]] \neq 0$, then the stability of $\uhl$ implies that $\lambda([y,x])=0$.
. Finally  if $[y,x]=0$ then $\lambda([y,x])=0$. Therefore $\lambda([y,x])=0$ for every $y \in H$ and so $\lambda([L,L])=0$. By Lemma \eqref{AR:luma}, the reverse direction is trivial.
\end{proof}
\begin{thm} \label{main:thm}
Let $L=Fx+H$ be a finite dimensional Lie algebra over a field $F$ and $H$ an ideal in $L$. Then $\sh$ is $adx$ stable if and only if $\uhl$ is $adx$ stable for every $\lambda$.
\end{thm}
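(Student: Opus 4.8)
The plan is to treat the two implications separately; the direction ``each $\uhl$ is $adx$ stable $\Rightarrow$ $\sh$ is $adx$ stable'' is immediate, since by definition $\sh=\bigoplus_{\lambda}\uhl$ and therefore $[x,\sh]=\sum_{\lambda}[x,\uhl]\subseteq\sum_{\lambda}\uhl=\sh$.

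For the converse I would argue as follows. Assume $[x,\sh]\subseteq\sh$, fix a weight $\lambda$, pick $0\neq u\in\uhl$, and aim to show $[x,u]\in\uhl$. Since $u\in\sh$, stability gives $[x,u]\in\sh$, so we may write $[x,u]=\sum_{\mu}w_{\mu}$ with $w_{\mu}\in U(H)_{\mu}$ and all but finitely many $w_{\mu}$ equal to $0$ (automatic, since $[x,u]$ is a single element of $U(H)$). The main computation is the Jacobi identity applied to $y\in H$, $x$, $u$: using that $[y,x]\in H$ because $H$ is an ideal, and that $u\in\uhl$,
\[
[y,[x,u]]=[[y,x],u]+[x,[y,u]]=\lambda([y,x])\,u+\lambda(y)\,[x,u].
\]
On the other hand $[y,[x,u]]=\sum_{\mu}\mu(y)\,w_{\mu}$. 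Equating the two expressions inside the direct sum $\sh=\bigoplus_{\mu}U(H)_{\mu}$ and reading off components: the $U(H)_{\lambda}$-component gives $\lambda([y,x])\,u=0$, hence $\lambda([y,x])=0$, while for $\mu\neq\lambda$ the $U(H)_{\mu}$-component gives $(\mu(y)-\lambda(y))\,w_{\mu}=0$. Since this holds for every $y\in H$, and since two distinct linear forms on $H$ must differ at some element of $H$, we obtain $w_{\mu}=0$ for all $\mu\neq\lambda$; hence $[x,u]=w_{\lambda}\in\uhl$, as desired.

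I do not anticipate a genuine obstacle. The whole mechanism is that the ``cross term'' $\lambda([y,x])\,u$ produced by the Jacobi identity is homogeneous of weight $\lambda$, so it cannot contaminate the off-weight components $w_{\mu}$, $\mu\neq\lambda$, and the direct-sum structure of $\sh$ then finishes the job; the hypothesis that the entire semi-center (and not merely one $\uhl$, which by Result \ref{fres} can fail to be $adL$ stable) be $adx$ stable is exactly what makes $[x,u]$ land in $\sh$ in the first place. The only care needed is routine bookkeeping with the weight decomposition, together with the observation that $\lambda([y,x])=0$ drops out for free, consistent with Proposition \ref{stab:gil}.
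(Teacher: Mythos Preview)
Your proof is correct and follows the same strategy as the paper's: apply the Jacobi identity to $[y,[x,u]]$ and compare weight components in the direct sum $\sh=\bigoplus_{\mu}U(H)_{\mu}$. Your version is in fact slightly cleaner, proceeding directly rather than by contradiction and avoiding the paper's (unnecessary) appeal to finiteness of the weight set in positive characteristic.
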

\begin{proof}
Clearly if $\sh = \zh$, then $\sh$ is $adL$ stable. So we assume that $\zh \subset \sh$. Let $\Delta$ = $\{\mu| \mu$ is weight on $U(H)\}$. Now since $charF=p>0$, then $\sl$ is finitely generated as a $\zl$ module, which implies that $\Delta$ is finite and we assume $dim_{F}\Delta = r$. Assume that $\sh$ is adx stable and assume by negation that there exists some $1 \leq k \leq r$ such such that $U(H)_{\mu_{k}}$ is not $adx$ stable. Let $0 \neq u \in U(H)_{\mu_{k}}$, hence $[x,u] \in \Sigma_{i \leq r}U(H)_{\mu_{i}}$, in particular
\begin{equation}\label{equ_beforeone}
[x,u]= \Sigma_{i \leq r}u_{i},\ u_{i} \in U(H)_{\mu_{i}}.
\end{equation}
Then for every $y \in H$, 
\begin{equation}\label{aqu_one}
[y,[x,u]]=\Sigma_{\mu_{i} \in \Delta} \mu_{i}(y)u_{i}.
\end{equation}
In addition  
\begin{equation}\label{aqu_two}
[y,[x,u]]=[[y,x],u]]+[x,[y,u]]=\mu_{k}([y,x])u + \mu_{k}(y)[x,u].
\end{equation}
Therefore combining \eqref{equ_beforeone}, \eqref{aqu_one} and \eqref{aqu_two} we get 
\begin{equation}\label{equ_sum}
\mu_{k}([y,x])u + \mu_{k}(y)\Sigma_{i \leq r}u_{i} = \Sigma_{\mu_{i} \in \Delta} \mu_{i}(y)u_{i}.
\end{equation}
Since $u_{i}$ belongs to different weight spaces, they are linearly independent, hence the only solution to \eqref{equ_sum} is $\mu_{k}([x,y])=0$ and $\mu_{k}(y) = \mu_{i}(y)$ for every $i \leq r$, which is impossible by the assumption, that $U(H)_{\mu_{k}}$ is not $adx$ stable. Hence $[x,u] \in U(H)_{\mu_{k}}$ and we are done. The second direction is trivial.
\end{proof}
\begin{cor}
Let $L$ be a finite dimensional Lie algebra over an algebraically closed field $F$, of char$F=p>0$ and $H$ any ideal of $L$ such that $[L,L] \subseteq H$. Then $Sz(U(H))$ is $adL$ stable if and only if $\lambda([L,L])=0$ for every $\lambda \in H^{*}$.
\end{cor}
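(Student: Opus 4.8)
The final corollary combines Theorem~\ref{main:thm}, Proposition~\ref{stab:gil}, and an observation about how $adL$ stability of the full semi-center relates to $adx$ stability. The plan is to reduce the statement about a general ideal $H$ with $[L,L] \subseteq H$ to the codimension-one situation handled by the earlier results, and then patch the pieces together. Throughout, I would fix a basis-type chain of ideals between $H$ and $L$ so that each step is an extension by a single element, and argue inductively along this chain.

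First I would reduce to the case that $H$ has codimension one in $L$. Since $[L,L] \subseteq H$, the quotient $L/H$ is abelian, so we may pick a chain $H = H_{0} \subset H_{1} \subset \cdots \subset H_{m} = L$ with each $H_{j}$ an ideal of $L$ and $H_{j} = Fx_{j} + H_{j-1}$; note $[L,L] \subseteq H \subseteq H_{j-1}$ for all $j$, so in particular $[H_{j}, H_{j}] \subseteq H_{j-1}$ and $H_{j-1}$ is a codimension-one ideal of $H_{j}$ with the commutator lying inside it. Now $Sz(U(H))$ being $adL$ stable is equivalent to it being $ad x_{j}$ stable for each $j = 1,\dots,m$ together with $ad H$ stability; but $ad H$ stability of $Sz(U(H))$ is automatic since $Sz(U(H))$ is by definition $H$-stable (each weight space $U(H)_{\lambda}$ is an $H$-submodule). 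So $Sz(U(H))$ is $adL$ stable iff it is $ad x_{j}$ stable for every $j$.

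Next, for each fixed $j$, apply Theorem~\ref{main:thm} with the Lie algebra $H_{j} = F x_{j} + H_{j-1}$ in the role of ``$L = Fx + H$''\,: wait --- here I actually want $H$ itself as the ideal, not $H_{j-1}$. The cleaner route is: $Sz(U(H))$ is $ad x_{j}$ stable iff (by the argument of Theorem~\ref{main:thm}, whose proof only uses that $H$ is an ideal and $[x_{j}, H] \subseteq H$) every weight space $U(H)_{\lambda}$ is $ad x_{j}$ stable. Then, treating $Fx_{j} + H$ as a Lie algebra in which $H$ is a codimension-one ideal, Proposition~\ref{stab:gil} tells us $U(H)_{\lambda}$ is $ad x_{j}$ stable for all $\lambda$ iff $\lambda([Fx_{j}+H, Fx_{j}+H]) = 0$ for all $\lambda \in H^{*}$. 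Since $[Fx_{j}+H, Fx_{j}+H] \subseteq [L,L]$ and, conversely, running over all $j$ the spaces $[x_{j},H]$ together with $[H,H]$ span $[L,L]$ (because $L = \sum_j F x_j + H$ and $[L,L]$ is generated by brackets of spanning elements), we get that $U(H)_{\lambda}$ is $ad x_{j}$ stable for every $j$ and every $\lambda$ iff $\lambda([L,L]) = 0$ for every $\lambda \in H^{*}$. Combining with the reduction of the previous paragraph finishes the proof.

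The main obstacle I anticipate is bookkeeping rather than a genuine mathematical difficulty: one must be careful that Proposition~\ref{stab:gil} is stated for a codimension-one ideal, so each application must genuinely be to a pair $F x_{j} + H \supseteq H$, and one must verify that the union over $j$ of the conditions $\lambda([Fx_j + H, Fx_j+H]) = 0$ is exactly $\lambda([L,L]) = 0$ --- this uses $[L,L] \subseteq H$ so that $[L,L] = \sum_j [x_j, H] + [H,H]$ and each summand $[x_j,H] + [H,H]$ equals the derived algebra of $Fx_j + H$. A secondary point to check is that the hypothesis ``algebraically closed, $\mathrm{char}\,F = p > 0$'' is inherited by all the sub-Lie-algebras $F x_j + H$ so that Proposition~\ref{stab:gil} applies at each stage; this is immediate. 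One should also note explicitly that the corollary does not require $L$ to be solvable, since neither Theorem~\ref{main:thm} nor Proposition~\ref{stab:gil} needs solvability.
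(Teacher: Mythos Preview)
The paper gives no explicit proof of this corollary; it is stated as an immediate consequence of Theorem~\ref{main:thm} and Proposition~\ref{stab:gil}. Your argument is correct and is exactly the intended derivation: choose elements $x_{1},\dots,x_{m}$ spanning $L$ modulo $H$, observe that $adL$-stability of $Sz(U(H))$ amounts to $adx_{j}$-stability for each $j$, apply Theorem~\ref{main:thm} to the pair $Fx_{j}+H\supset H$ to pass to $adx_{j}$-stability of every weight space $U(H)_{\lambda}$, and then apply Proposition~\ref{stab:gil} to obtain $\lambda([x_{j},H]+[H,H])=0$, which over all $j$ is precisely $\lambda([L,L])=0$.
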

As was seen in \cite[Theorem 4.7]{braunvernik}, if $[L,L]$ is nilpotent then $\sl$ is factorial. By Lemma \eqref{AR:luma} $[L,L]$ being nilpotent, implies that $\uhl$ is $adL$ stable and consequently by Lemma \eqref{stab:gil} $\lambda([L,L])=0$. Therefore $\sl$ is factorial if $\sh$ is $adL$ stable. This make certain relation between factoriality of $\sl$ and stability of semi-center of certain subalgebras. Although the reverse direction is wrong, as $\sl$ is factorial does not implies that $\sh$ is $adL$ stable. We hope this relation will help on further research related factoriality of semi-center. \newline \\

\end{document}